\documentclass[12pt,reqno]{amsart}
\usepackage[T1]{fontenc}
\usepackage{lmodern}
\usepackage[margin=1.2in]{geometry}
\usepackage{mathrsfs}
\usepackage[hidelinks]{hyperref}
\hypersetup{pdfstartview={FitH}}

\newtheorem{theorem}{Theorem}
\newtheorem{lemma}{Lemma}[section]
\newtheorem{proposition}[lemma]{Proposition}
\theoremstyle{remark}
\newtheorem{remark}{Remark}
\theoremstyle{definition}
\newtheorem{problem}{Problem}

\numberwithin{equation}{section}

\title[A 21-coloring of the plane]{A 21-Coloring of the Plane Without Monochromatic Unit-Area Rectangles}
\author[Gary Hu]{Gary Hu}
\address{Williams College}
\email{gh7@williams.edu}
\author[Yumo Liu]{Yumo Liu}
\address{Bard College}
\email{yy2323@bard.edu}
\date{}

\subjclass[2020]{Primary 05D10; Secondary 52C10}
\keywords{Euclidean Ramsey theory, plane coloring, rectangles, triangular lattice, hexagonal tiling}

\begin{document}

\begin{abstract}
Erd\H{o}s and Graham asked whether every finite coloring of the plane must contain a monochromatic rectangle of any prescribed area. Kova\v{c} gave a negative answer by constructing a $25$-coloring with no monochromatic rectangle of area $1$. We reduce the number of colors to $21$ by replacing the square cells in his construction with regular hexagons.
\end{abstract}

\maketitle

\section{Introduction}

Euclidean Ramsey theory asks which finite point configurations must occur monochromatically in every finite coloring of Euclidean space. In 1980, Graham proved that every finite coloring of $\mathbb R^n$ contains a monochromatic $n$-simplex with any prescribed positive volume \cite{graham}. Erd\H{o}s and Graham asked whether an analogous statement holds for rectangles and parallelograms \cite{erdos-graham}. The rectangle problem was still open in the 2015 second edition of Graham and Butler's \emph{Rudiments of Ramsey Theory} \cite{graham-butler}.

In 2026, Kova\v{c} answered the rectangle question by constructing a $25$-coloring of $\mathbb R^2$ with no monochromatic rectangle of area $1$ \cite{kovac}. For rectangles, scaling shows that it suffices to consider only rectangles of area 1, so this answers the question negatively. His coloring has a stronger property: it contains no monochromatic parallelogram whose two adjacent side lengths have product $1$. We improve the number of colors from $25$ to $21$.

\begin{theorem}\label{thm:main}
There exists a $21$-coloring of $\mathbb R^2$ with no monochromatic parallelogram whose two adjacent side lengths have product $1$.
\end{theorem}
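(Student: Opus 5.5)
The plan is to reduce the geometric condition to a condition on a single complex number, and then to color $\mathbb C$ periodically by lattice cells. I expect this to be the shape of Kova\v{c}'s argument, with square cells and a $5\times 5$ period giving $25$ colors. Identify $\mathbb R^2$ with $\mathbb C$ and let $\Phi(z)=z^2$. Write a parallelogram with adjacent side vectors $u,v$ as $p,\ p+u,\ p+u+v,\ p+v$. Then
\[
\Phi(p)-\Phi(p+u)+\Phi(p+u+v)-\Phi(p+v)=2uv ,
\]
because the polarization of the quadratic map $z\mapsto z^2$ is $(u,v)\mapsto 2uv$. Since $|uv|=|u|\,|v|$, the side-length condition becomes $|2uv|=2$. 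So it suffices to produce a finite coloring $\chi$ of $\mathbb C$ with the following property: whenever $a,b,c,d$ all receive the same color, $|a-b+c-d|\neq 2$. The coloring of the plane is then $\chi\circ\Phi$. (Degenerate parallelograms need no separate treatment, because the identity holds for all $u,v$.)

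To build $\chi$, I would replace squares by regular hexagons. Take the triangular lattice $\Lambda=\mathbb Z[\omega]$ scaled by a parameter $s>0$, and tile $\mathbb C$ by the hexagonal Voronoi cells $H+\lambda$, $\lambda\in s\Lambda$, with a half-open convention so the cells partition the plane. Color a cell by the class of $\lambda$ in $s\Lambda/sL$, where $L\subset\Lambda$ is a sublattice of index $21$. A concrete choice is $L=(5+\omega)\Lambda$, which has index $N(5+\omega)=25-5+1=21$.

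Now take $a,b,c,d$ of the same color. Then $a-b+c-d\in sL+2(H-H)$. The set $2(H-H)$ is a hexagon with inradius $2\cdot\sqrt3\,s=2\sqrt3\,s$ (the side-$s/\sqrt3$ hexagon $H$ has inradius $s/2$, and $H-H$ doubles all of its dimensions). It is therefore contained in the disk of radius $4R$, where $R=s/\sqrt3$ is the circumradius of $H$. The forbidden circle $|w|=2$ is avoided if two conditions hold. First, $4R<2$, so the neighborhood of $0$ misses the circle. Second, every nonzero $\ell\in sL$ satisfies $|\ell|>2+4R$, or more sharply $\operatorname{dist}\bigl(\ell+2(H-H),0\bigr)$ stays off the circle. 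The minimal norm of $L$ is $|5+\omega|\,s=\sqrt{21}\,s$.

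So the task reduces to choosing $s$ with
\[
\frac{4s}{\sqrt3}<2 \quad\text{and}\quad \sqrt{21}\,s-\frac{4s}{\sqrt3}>2 .
\]
Numerically, $\sqrt{21}-4/\sqrt3\approx 4.58-2.31=2.27$. This requires $s>0.88$ together with $s<0.866$, which fails by a hair with the crude disk bound.

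This verification is the main obstacle. My first attempt would replace the circumscribed disks by the actual hexagon $2(H-H)$ and compare it to the circle directly. The inner condition only needs the hexagon to lie inside $|w|<2$, which is exactly $4R<2$. The outer condition needs the hexagon around each nearest point of $sL$ to lie outside $|w|\le 2$. For the outer condition one uses the inradius in the relevant direction rather than the circumradius, and checks how the orientation of $5+\omega$ sits relative to the hexagon's vertices. If that is still too tight, I would next try the other natural index-$21$ sublattices and their rotations, or shrink the cells inside each lattice period so that each color class consists of smaller hexagons separated by gaps. The goal throughout is to find a configuration where the annulus $\{|w|=2\}$ fits between $2(H-H)$ and its translates by the nonzero vectors of $sL$.
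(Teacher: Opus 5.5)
\textbf{There is a genuine gap: you never show that some scaling $s$ works.}

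Your reduction and your construction are correct, and they are the paper's. With the convention $\omega=e^{2\pi i/3}$ that your norm computation $25-5+1$ presupposes, $5+\omega=\tfrac92+\tfrac{\sqrt3}{2}i$ is exactly the paper's $\alpha=4+e^{\pi i/3}$. Your containment $a-b+c-d\in sL+4H$ is the paper's, with $s=\tfrac56$ there.

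You correctly find that the circumscribed-disk bound fails ($s>0.879$ versus $s<0.866$). After that, the argument ends in a plan, with fallbacks (other sublattices, shrinking the cells) in case the plan fails. This quantitative check is the whole content of the theorem beyond the standard $z\mapsto z^2$ reduction. The paper also shows that no index below $21$ can pass it, so it cannot be left open.

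There is also a slip along the way: $2(H-H)=4H$ has inradius $2s$, not $2\sqrt3\,s$. Your value exceeds the circumradius $4s/\sqrt3$, which is impossible.

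The missing step is short. Work first with $s=1$:
\begin{itemize}
\item Since $4H\subset\{\operatorname{Re}z\le2\}$, and the side of $4H$ on the line $\operatorname{Re}z=2$ has endpoints $2\pm2i/\sqrt3$, the projection $2+\tfrac{\sqrt3}{2}i$ of $\alpha$ onto that line lies on the side. Hence $d(\alpha,4H)=\tfrac52$ exactly.
\item Multiplication by the six units of $\Lambda$ preserves $L$ and $H$, so the same distance holds for all six minimal vectors $u\alpha$ of $L$.
\item Every other nonzero $\ell=\alpha h\in L$ has $|h|^2\ge3$, because $a^2-ab+b^2\equiv(a+b)^2\pmod 3$ never equals $2$. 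So $|\ell|\ge\sqrt{63}$, and the crude bound $\sqrt{63}-4/\sqrt3>\tfrac52$ suffices for these points.
\end{itemize}
After scaling, your two conditions read $4s/\sqrt3<2$ and $\tfrac52 s>2$, that is, $\tfrac45<s<\tfrac{\sqrt3}{2}$. This interval is nonempty.

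Note that the lower bound on $d(\ell,4sH)$ has to come from a supporting line of the hexagon, as above. The quantity $|\ell|$ minus the inradius of $4sH$ is an \emph{upper} bound for that distance, not a lower one. So ``using the inradius in the relevant direction'' is rigorous only in this supporting-line form.
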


For a rectangle, the product of two adjacent side lengths is its area, so the theorem gives a $21$-coloring with no monochromatic rectangle of area $1$. The analogous question for parallelograms of prescribed area is more tricky and remains open \cite{kovac}. If $u$ and $v$ are adjacent side vectors and $\theta$ is the angle between them, then the area is $|u||v||\sin\theta|$. The theorem rules out only the case $|u||v|=1$, and a parallelogram can have area $1$ with $|u||v|>1$; for instance, take $\sin\theta=1/(|u||v|)$.

Our construction follows Kova\v{c}'s approach. We identify the plane with $\mathbb C$, assign colors according to the location of $z^2$, and replace his periodic square tiling with the Voronoi tiling of the triangular lattice. The relevant sublattice has index $21$. Section~2 gives the coloring and proves Theorem~\ref{thm:main}; Section~3 shows that no smaller index works in this construction.

\section{Proof of Theorem~\ref{thm:main}}

Identify $\mathbb R^2$ with $\mathbb C$. Let $P=ABCD$ be a possibly degenerate parallelogram. We write its vertices as $z_A=z$, $z_B=z+u$, $z_C=z+u+v$, and $z_D=z+v$.

Consider the alternating square sum
\begin{equation}\label{eq:invariant}
\mathscr I(P)=z_A^2-z_B^2+z_C^2-z_D^2.
\end{equation}
Substituting the vertex coordinates gives $\mathscr I(P)=z^2-(z+u)^2+(z+u+v)^2-(z+v)^2=2uv$. Thus $|\mathscr I(P)|=2$ whenever the product of the two adjacent side lengths is $1$. We will therefore construct a coloring for which $\mathscr I(P)$ avoids the circle $|w|=2$ whenever $P$ is monochromatic.

Let $\omega=e^{\pi i/3}$ and $L=\mathbb Z+\mathbb Z\omega$. The Voronoi cell $H_0$ of $0$ in $L$ is a regular hexagon with inradius $1/2$ and circumradius $1/\sqrt3$. Choose a half-open version $V$ of $H_0$ by assigning each boundary point of the Voronoi tiling to one adjacent cell. Then the translates $\lambda+V$, with $\lambda\in L$, partition $\mathbb C$. Set $\alpha=4+\omega$.

For $j\in\{0,1,\ldots,20\}$, define
\[
\mathscr C_j=\bigl\{z\in\mathbb C:z^2\in \frac{5}{6}(p+q\omega+V)\text{ for some }p,q\in\mathbb Z\text{ with }p-4q\equiv j\pmod{21}\bigr\}.
\]
Because the cells $\frac{5}{6}(\lambda+V)$ partition $\mathbb C$, the sets $\mathscr C_0,\ldots,\mathscr C_{20}$ form a $21$-coloring of $\mathbb C$.

The congruence condition is equivalent to membership in $\alpha L$:
$p+q\omega\in\alpha L$ if and only if $p-4q\equiv0\pmod{21}$. If $p+q\omega=\alpha(r+s\omega)$, then $p=4r-s$ and $q=r+5s$, so $p-4q=-21s$. Conversely, if $p-4q=21k$, then $p+q\omega=\alpha((q+5k)-k\omega)$.

\begin{lemma}\label{lem:separation}
The set $\frac{5}{6}(\alpha L+4H_0)$ is disjoint from the circle $|w|=2$.
\end{lemma}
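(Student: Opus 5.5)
The plan is to unscale and reduce the claim to a distance estimate between the lattice $\alpha L$ and the hexagon $4H_0$. A point $w=\frac56(\mu+h)$ with $\mu\in\alpha L$ and $h\in 4H_0$ satisfies $|w|=2$ exactly when $|\mu+h|=\frac{12}{5}$. So it suffices to show two things: $|h|<\frac{12}{5}$ for every $h\in 4H_0$, which handles $\mu=0$, and $|\mu+h|>\frac{12}{5}$ for every nonzero $\mu\in\alpha L$ and every $h\in 4H_0$. The hexagon $4H_0$ has inradius $2$ and circumradius $4/\sqrt3\approx 2.309$. Since $4/\sqrt3<12/5$ (equivalently $400<432$), the case $\mu=0$ follows immediately.

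For $\mu\neq0$, write $\mu=\alpha x$ with $x\in L\setminus\{0\}$. Then $|\mu|^2=|\alpha|^2|x|^2=21\,N(x)$, where $N(a+b\omega)=a^2+ab+b^2$ is the norm form. Its nonzero values are $1,3,4,\dots$. I split into two cases according to whether $N(x)=1$ or $N(x)\ge 3$.

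\emph{Case $N(x)\ge3$.} Here $|\mu|\ge\sqrt{63}$. The triangle inequality gives $|\mu+h|\ge\sqrt{63}-4/\sqrt3\approx 7.94-2.31>\frac{12}{5}$.

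\emph{Case $N(x)=1$.} This case is delicate, because the crude bound $\sqrt{21}-4/\sqrt3\approx 2.27$ falls short of $2.4$. Here $x$ is a unit $\omega^k$, so $\mu=\alpha\omega^k$. Multiplication by $\omega$ preserves $H_0$, since $H_0$ is the Voronoi cell of $L$ and $\omega L=L$. It therefore suffices to treat $\mu=\alpha=\frac92+\frac{\sqrt3}{2}i$.

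For this I use a supporting line of the hexagon rather than its circumradius. The edges of $H_0$ are perpendicular to the unit vectors $\omega^j$, which point to the six nearest lattice points. Hence $\operatorname{Re}(h\,\overline{\omega^j})\le 2$ for all $h\in 4H_0$ and all $j$. Take $j=3$, i.e. the direction $-1$, so that $\operatorname{Re}(-h)\le 2$, or $\operatorname{Re}h\ge-2$. Then
\[
|\alpha+h|\ge \operatorname{Re}(\alpha+h)\ge \tfrac92-2=\tfrac52>\tfrac{12}{5}.
\]
The main point of the proof is this observation: $\alpha$ lies close to a face normal of the hexagon, at argument about $10.9^\circ$, rather than near a vertex direction. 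As a result its distance to $-4H_0$ is governed by the inradius. Choosing $\alpha=4+\omega$ is exactly what makes the margin $\frac52$ versus $\frac{12}5$ work.

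Combining the two cases shows that no point of $\frac56(\alpha L+4H_0)$ has modulus $2$, which proves Lemma~\ref{lem:separation}.
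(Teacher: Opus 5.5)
Your proof is correct and follows the paper's argument essentially step for step. You use the same three cases: $\mu=0$ via the circumradius, $N(x)\ge 3$ via the triangle inequality with $\sqrt{63}$, and $N(x)=1$ reduced by rotation to $\alpha=\tfrac92+\tfrac{\sqrt3}{2}i$; the only differences are that you compare against $12/5$ after unscaling, and in the unit case you use just the supporting half-plane $\operatorname{Re}h\ge -2$, which is exactly the lower-bound half of the paper's computation $d(\alpha,4H_0)=5/2$ (the paper's check that the projection lands on the edge matters only for equality).
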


\begin{proof}
The hexagon $\frac{5}{6}(4H_0)$ has circumradius $10/(3\sqrt3)<2$, so the copy centered at $0$ lies inside the open disk $|w|<2$.

Let $0\ne h=p+q\omega\in L$. Since $|h|^2=p^2+pq+q^2$ and $p^2+pq+q^2\equiv(p-q)^2\pmod3$, the positive integer $|h|^2$ cannot equal $2$. Hence either $|h|=1$ or $|h|\ge\sqrt3$.

If $|h|\ge\sqrt3$, then $|\alpha h|\ge\sqrt{63}$, while $4H_0$ has circumradius $4/\sqrt3$. Hence $d(\alpha h,4H_0)\ge\sqrt{63}-4/\sqrt3>5/2$.
If $|h|=1$, multiplication by $h$ is a rotation preserving $L$ and $H_0$, so it is enough to consider $h=1$. We have $\alpha=9/2+(\sqrt3/2)i$, and the side of $4H_0$ on the line $\operatorname{Re}z=2$ has endpoints $2\pm2i/\sqrt3$. The orthogonal projection of $\alpha$ onto this side lies on the segment, so $d(\alpha,4H_0)=5/2$.

We have shown that $d(\alpha h,4H_0)\ge5/2$ for every nonzero $h\in L$. After scaling by $5/6$, this gives $d(0,\frac{5}{6}(\alpha h+4H_0))\ge25/12>2$. Thus every nonzero translate lies outside the disk $|w|\le2$, while the copy centered at $0$ lies inside it. None of the sets $\frac{5}{6}(\alpha h+4H_0)$ can therefore meet the circle $|w|=2$.
\end{proof}

\begin{proof}[Proof of Theorem~\ref{thm:main}]
Suppose that $P=ABCD$ is monochromatic, with all four vertices in $\mathscr C_j$. For each $X\in\{A,B,C,D\}$, write $z_X^2=\frac{5}{6}(\lambda_X+\varepsilon_X)$, where $\lambda_X=p_X+q_X\omega\in L$ satisfies $p_X-4q_X\equiv j\pmod{21}$ and $\varepsilon_X\in V\subset H_0$. The alternating sum $\lambda_A-\lambda_B+\lambda_C-\lambda_D$ is congruent to $j-j+j-j=0$ modulo $21$, so it lies in $\alpha L$. Since $H_0$ is centrally symmetric and convex, $\varepsilon_A-\varepsilon_B+\varepsilon_C-\varepsilon_D\in4H_0$.
It follows that $\mathscr I(P)\in\frac{5}{6}(\alpha L+4H_0)$. Lemma~\ref{lem:separation} shows that this set is disjoint from the circle $|w|=2$. Hence $|\mathscr I(P)|\ne2$, and the product of the two adjacent side lengths of $P$ is not $1$.
\end{proof}

\section{Sharpness of the lattice construction}

We next show that no smaller index can work in this construction. Let $M\subset L$ be a finite-index sublattice. Suppose that, after scaling, the copy of $4H_0$ centered at $0$ lies inside $|w|<2$, while every copy centered at a nonzero point of $M$ lies outside the closed disk $|w|\le2$. The first condition forces the scaling factor to be less than $\sqrt3/2$. The second can then hold only if $d(m,4H_0)>4/\sqrt3$ for every nonzero $m\in M$.

\begin{proposition}\label{prop:sharp}
If $M\subset L$ has index at most $20$, then there exists $0\ne m\in M$ such that $d(m,4H_0)\le4/\sqrt3$.
\end{proposition}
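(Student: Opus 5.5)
The plan is to avoid Minkowski-type area estimates and work directly with the shortest nonzero vector of $M$. There are three steps: bound the norm of that vector using Hermite's constant in dimension two, use the arithmetic of norms in $L$ to sharpen the bound to $19$, and then check by hand that every element of $L$ of norm at most $19$ satisfies $d(m,4H_0)\le 4/\sqrt3$.

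\emph{Step 1 (Hermite bound).} The lattice $L$ has covolume $\sqrt3/2$, so a sublattice $M$ of index $n$ has covolume $n\sqrt3/2$. The classical two-dimensional Hermite inequality says that a shortest nonzero vector $m$ of a planar lattice with covolume $\Delta$ satisfies $|m|^2\le (2/\sqrt3)\Delta$. This comes from reducing a basis so that the angle lies between $60^\circ$ and $120^\circ$ and $|m|\le|m'|$. For $M$ it gives $|m|^2\le n\le 20$.

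\emph{Step 2 (arithmetic of norms).} Write $m=p+q\omega$, so that $|m|^2=p^2+pq+q^2$ is a positive integer. The value $20=2^2\cdot 5$ is not of this form. Indeed, $5\equiv 2\pmod 3$ is inert in $\mathbb Z[\omega]$, so it must divide a norm to an even power. Alternatively, one can simply list the representations, as in the mod-$3$ argument in the proof of Lemma~\ref{lem:separation}. Hence $|m|^2\le 19$ in all cases.

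\emph{Step 3 (geometric check).} Let $R=\{w: d(w,4H_0)\le 4/\sqrt3\}$. Since $4H_0$ has inradius $2$, the set $R$ contains the closed disk of radius $2+4/\sqrt3\approx 4.309$. Every $m$ with $|m|^2\le 18$ has $|m|\le\sqrt{18}\approx 4.243$, so it lies in $R$. The remaining case is $|m|^2=19$. The elements of norm $19$ are the twelve numbers $u(3+2\omega)$ and $u(2+3\omega)$ with $u$ a unit. Rotation by units and complex conjugation both preserve $L$ and $H_0$, and conjugation followed by multiplication by $\omega$ sends $3+2\omega$ to an associate of $2+3\omega$. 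So it suffices to treat $3+2\omega=4+\sqrt3\,i$. This point lies at angle $\arctan(\sqrt3/4)\approx 23.4^\circ$, close to the vertex direction $30^\circ$ of $4H_0$. That vertex is $2+2i/\sqrt3$. Its squared distance to $4+\sqrt3 i$ is $4+(\sqrt3-2/\sqrt3)^2=4+1/3=13/3$, and $13/3\le 16/3=(4/\sqrt3)^2$. Hence $d(3+2\omega,4H_0)\le\sqrt{13/3}<4/\sqrt3$. Therefore the shortest nonzero vector of $M$ always lies in $R$, which proves Proposition~\ref{prop:sharp}.

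The main obstacle is Step 3 at norm $19$. The crude disk estimate fails there, because $\sqrt{19}\approx 4.359$ exceeds $4.309$, so the directions of the norm-$19$ vectors genuinely matter. The symmetry reduction to the single point $3+2\omega$ is what keeps this step manageable. For index $20$, Step 2 is also essential: without excluding norm $20$ one would have to analyse vectors of length $\sqrt{20}$ in every direction, and a vector of length $\sqrt{20}$ pointing at an edge midpoint of $4H_0$ would not lie in $R$. As a fallback, Minkowski's theorem applied to the convex body $R$ alone already handles index at most $18$. The area of $R$ is $8\sqrt3+32+16\pi/3\approx 62.6$, which is at least $4\cdot 18\cdot\sqrt3/2\approx 62.4$. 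So only indices $19$ and $20$ would then need the arithmetic argument above.
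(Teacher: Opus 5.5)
Your proof is correct, and it takes a genuinely different route from the paper's.

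The paper never looks at a shortest vector. It exhibits $21$ points of $L$ in the translate $Q=4H_0-(1+\omega)/3$ and finds two of them in the same coset of $M$ by pigeonhole. It then uses $Q-Q=8H_0$ to get $m/2\in4H_0$, hence $d(m,4H_0)\le|m|/2\le4/\sqrt3$. This is a discrete Blichfeldt-type argument: one lattice-point count, no arithmetic, no case analysis, and the number $21$ enters directly as that count.

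You instead bound a shortest vector of $M$ by the Lagrange--Gauss (Hermite) inequality $|m|^2\le[L:M]\le20$. You then exclude norm $20$ arithmetically; the congruence $p^2+pq+q^2\equiv(p-q)^2\pmod 3$ from Lemma~\ref{lem:separation} already does this, since $20\equiv2\pmod3$, so the inert-prime argument is not needed. The inscribed disk of radius $2$ covers norms up to $18$. Norm $19$ is settled by a single dihedral-orbit check, and your computation $|(4+\sqrt3\,i)-(2+2i/\sqrt3)|^2=13/3<16/3$ is right.

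Your route buys an explicit witness: any shortest nonzero vector of $M$. It also links directly to Lemma~\ref{lem:separation}. At index $21$ the Hermite bound allows norm $21$, and $\alpha=4+\omega$ is a norm-$21$ vector at distance $5/2>4/\sqrt3$ from $4H_0$; this is the case $h=1$ in that lemma. The price is reliance on two arithmetic coincidences: $20$ is not a norm, and the norm-$19$ vectors point near vertex directions of $4H_0$. The paper's count avoids both and is uniform.

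Your closing Minkowski fallback is unnecessary, since the main argument already covers every index up to $20$.
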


\begin{proof}
In lattice coordinates, a point $x+y\omega$ lies in $4H_0$ if and only if $|2x+y|\le4$, $|x+2y|\le4$, and $|y-x|\le4$. Let $Q=4H_0-(1+\omega)/3$. A lattice point $p+q\omega$ lies in $Q$ if and only if $|2p+q+1|\le4$, $|p+2q+1|\le4$, and $|q-p|\le4$.
The lattice points in $Q$ are:

\begin{center}
\begin{tabular}{c|l}
$q$ & $p$ \\ \hline
$-3$ & $1$ \\
$-2$ & $-1,0,1,2$ \\
$-1$ & $-2,-1,0,1,2$ \\
$0$ & $-2,-1,0,1$ \\
$1$ & $-3,-2,-1,0,1$ \\
$2$ & $-2,-1$
\end{tabular}
\end{center}

There are $21$ lattice points in $Q$. If $[L:M]\le20$, two of them, say $x$ and $y$, lie in the same coset of $M$. Their difference $m=x-y$ is a nonzero point of $M$. Since $Q$ is a translate of $4H_0$, we have $m\in Q-Q=8H_0$. Hence $m/2\in4H_0$ and $|m|\le8/\sqrt3$. It follows that $d(m,4H_0)\le|m-m/2|=|m|/2\le4/\sqrt3$.
\end{proof}

Multiplication by $\alpha=4+\omega$ scales Euclidean area by $|\alpha|^2=21$, so $[L:\alpha L]=21$. Lemma~\ref{lem:separation} shows that index $21$ works, while Proposition~\ref{prop:sharp} rules out every index at most $20$. Thus $21$ is the smallest index for which this sublattice construction can keep the translates away from the circle $|w|=2$.

\begin{remark}
  Recall that a coloring is Jordan-measurable when the boundary of each color class has Lebesgue measure zero. Kova\v{c}'s $25$-coloring is Jordan-measurable, and so is the coloring constructed here. For our coloring, the boundary of each color class is contained in the inverse image under $z\mapsto z^2$ of the union of the Voronoi-cell boundaries. This union is a countable union of line segments, and the inverse image of each segment is contained in a conic of planar measure zero.
\end{remark}

\begin{problem}\label{prob:min-colors}
Determine the least integer $r$ for which there exists an $r$-coloring of $\mathbb R^2$ with no monochromatic rectangle of area $1$.
\end{problem}

Theorem~\ref{thm:main} gives $r\le21$. Proposition~\ref{prop:sharp} shows that the construction of Section~2 cannot produce fewer colors.

\section*{Acknowledgments}

This work was completed as part of the REU on Combinatorics, AI, and Algorithms for Real Problems (REU-CAAR) at the University of Maryland. We thank Bill Gasarch for introducing us to this problem and for his guidance throughout this project.

\end{document}